\theoremstyle{plain}
\newtheorem{teo}{}[section]
\newtheorem{prop}[teo]{Proposition}
\newtheorem{cor}[teo]{Corollary}
\newtheorem{thm}[teo]{Theorem}
\theoremstyle{definition}
\newcommand\blfootnote[1]{%
  \begingroup
  \renewcommand\thefootnote{}\footnote{#1}%
  \addtocounter{footnote}{-1}%
  \endgroup
}
\title{Every group retraction can be realized as a topological retraction}
\author{Pedro J. Chocano}
\date{}
\begin{document}

\maketitle

\begin{abstract}

Given a group retraction \( r: G \rightarrow H \), we construct a finite topological space \( X_r \) of height 1, together with a topological retraction \( \overline{r}: X_r \rightarrow X_r \), such that the group of automorphisms \( \mathrm{Aut}(X_r) \) (or the group of self-homotopy equivalences \( \mathcal{E}(X_r) \)) of $X_r$ is isomorphic to \( G \), and \( \mathrm{Aut}(\overline{r}(X_r)) \) (or \( \mathcal{E}(\overline{r}(X_r)) \)) is isomorphic to \( H \). Moreover, there is a natural map \(\overline{r}' : \mathrm{Aut}(X_r) \rightarrow \mathrm{Aut}(\overline{r}(X_r)) \) that coincides with the original group retraction \( r \). As a direct consequence of this construction, we show that height 1 is the minimal height required to realize any finite group as the group of automorphisms (or the group of self-homotopy equivalences) of a finite topological space, except in the case where \( G \) is a symmetric group. In that unique case, the group can be realized by a finite topological space of height 0.

\end{abstract}
\blfootnote{2020  Mathematics  Subject  Classification: 06A11, 06A06, 20C99,  55P10.}
\blfootnote{Keywords: finite topological spaces, finite posets, group of automorphisms, retraction, group of self-homotopy equivalences.}

\section{Introduction}\label{sec_intro}
A classical problem in mathematics is whether an algebraic structure can be realized by a topological space. In recent years, this question has been fruitfully explored in the context of non-finite Alexandroff spaces and finite topological spaces (or partially ordered sets). We begin by briefly reviewing some of the most relevant recent results in this direction.

In 1946, G. Birkhoff~\cite{birkhoff1936order} proved that every finite group $G$ can be realized as the group of automorphisms  $\textnormal{Aut}(X)$ of a finite partially ordered set $X$. Decades later, M.C. Thornton~\cite{thornton1972spaces} reduced the number of points required for such realizations, initiating the study of general constructions that realize any group \( G \) using as few points as possible. J.A. Barmak and E.G. Minian~\cite{barmak2009automorphism} improved upon Thornton's results, and more recently, J.A. Barmak~\cite{barmak2020automorphism2} showed that it suffices to use \( 4|G| \) points to realize a group \( G \). The problem of finding the minimal number of points needed to realize cyclic groups was addressed and solved in~\cite{barmak2024smallest} by J.A. Barmak and A.N. Barreto.

The original construction in~\cite{barmak2009automorphism} inspired the question of whether any (not necessarily finite) group can be realized as the group of self-homotopy equivalences \( \mathcal{E}(X) \) of an Alexandroff space \( X \). This problem was affirmatively resolved in~\cite{chocano2020topological}, where the authors generalized previous results to non-finite groups. Building on this, the same authors showed in~\cite{chocano2020some} that every group homomorphism \( f: G \rightarrow H \) can be realized by an Alexandroff space \( X_f \), satisfying \( \mathrm{Aut}(X_f) \simeq G \), \( \mathcal{E}(X_f) \simeq H \), and such that the natural map \( \tau: \mathrm{Aut}(X_f) \rightarrow \mathcal{E}(X_f) \), given by \( \tau(g) = [g] \), coincides with \( f \). Furthermore, in~\cite{chocano2025realizing}, one of the previous authors proved that the regular representation of a finite group can be realized by a finite topological space using its homology groups and the natural actions of \( \mathrm{Aut}(X) \) and \( \mathcal{E}(X) \) on them. A generalization of this result can be found in \cite{viruel2024permutation}, where C. Costoya, R. Gomes and A. Viruel considered the more general framework of realizing some group actions on permutation modules.

In this paper, we focus on the following problem: given a group retraction \( r: G \rightarrow H \) between two finite groups, is it possible to realize it using topological spaces within the topological category (or the homotopical category)? We provide a positive answer to this question through a concrete construction of a finite topological space \( X_r \) of height 1. Note that this problem differs from the one introduced above regarding the realization of group homomorphisms, as the earlier problem considers both the group of automorphisms and the group self-homotopy equivalences simultaneously, while here we restrict ourselves to only one category.

Moreover, the proposed construction of the space \( X_r \) provides a new approach to the following question: given a finite group \( G \), what is the minimal possible height of a finite topological space \( X \) such that \( \mathrm{Aut}(X) \cong G \) (respectively, \( \mathcal{E}(X) \cong G \))? This question was previously addressed in \cite[Theorem 3.1]{viruel2024permutation}, where it was shown that height 1 suffices unless \( G \) is a symmetric group.

This leads naturally to a further problem concerning the simultaneous minimality of height and cardinality: \emph{given a finite group \( G \) that is not symmetric, what is the minimal cardinality of a finite topological space of height 1 realizing \( G \)?} This last problem is not addressed in the present paper. However, it opens an interesting direction for future research, and we hope to explore it in a forthcoming work.

The organization of the paper is as follows. In Section~\ref{sec_preliminaries}, we introduce the basic notation and concepts from the theory of finite topological spaces to make the paper as self-contained as possible. In Section~\ref{sec_main_result}, we present the main results and provide the constructions that prove them.

\section{Preliminaries}\label{sec_preliminaries}

For a comprehensive introduction to the theory of finite topological spaces, we refer the reader to \cite{barmak2011algebraic, may1966finite}.

Let \((X, \tau)\) be a finite \(T_0\)-topological space and let \(x \in X\). The open set \(U_x\) (respectively, the closed set \(F_x\)) is defined as the intersection of all open (respectively, closed) sets containing \(x\). We define a relation \(x \leq_\tau y\) if and only if \(U_x \subseteq U_y\). This relation induces a partial order on \(X\). Conversely, given a finite partially ordered set (or poset) \((X, \leq)\), the collection of upper sets forms a basis for a \(T_0\)-topology on \(X\). These two constructions are mutually inverse. Moreover, a map \(f: (X, \tau) \to (Y, \sigma)\) is continuous if and only if \(f: (X, \leq_\tau) \to (Y, \leq_\sigma)\) is order-preserving. From this, it can be deduced a crucial result (\cite{alexandroff1937diskrete}).
\begin{thm}
    The category of finite \(T_0\)-topological spaces is isomorphic to the category of finite partially ordered sets. 
\end{thm}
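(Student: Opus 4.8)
The plan is to exhibit a pair of functors between the two categories and check that they are mutually inverse, both on objects and on morphisms. I would begin with the elementary but essential observation that in a finite space the intersection of \emph{any} family of open sets is again open (there are only finitely many open sets), so for each point $x$ the set $U_x$ introduced in the text is genuinely the smallest open set containing $x$; dually $F_x=\overline{\{x\}}$ is the smallest closed set containing $x$. With this in hand, reflexivity and transitivity of the relation $x \leq_\tau y \iff U_x \subseteq U_y$ are immediate, and antisymmetry is precisely the place where the $T_0$ hypothesis enters: if $U_x = U_y$ then an open set contains $x$ if and only if it contains $y$, so $T_0$ forces $x = y$. Hence $(X,\leq_\tau)$ is a genuine poset, and a continuous map clearly induces a set map on points.

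Next I would treat the opposite direction: given a finite poset $(X,\le)$, declare a subset open precisely when it is an up-set (equivalently, take the principal up-sets $\uparrow x$ as a basis; since up-sets are closed under intersection this generates exactly the up-sets). This is a topology, because up-sets are stable under arbitrary unions and intersections and contain $\varnothing$ and $X$, and it is $T_0$: for $x\neq y$ at least one of $x\not\le y$, $y\not\le x$ holds, and then the corresponding principal up-set separates the two points. One should also record that in this topology $U_x=\uparrow x$, which is the identity needed to compare the two constructions.

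The core of the argument is then to verify that the two passages are mutually inverse on objects and, crucially, that a set map $f\colon X\to Y$ is continuous for the associated topologies if and only if it is order-preserving for the associated orders. For ``continuous $\Rightarrow$ monotone'', given $x\le y$ I would use that the preimage of the minimal open neighbourhood of $f(x)$ is an open set containing $x$, hence contains $U_x$ and therefore $y$, forcing $f(x)\le f(y)$; for the converse, the preimage of an up-set under a monotone map is again an up-set, hence open. Once the object-level bijection and this morphism-level equivalence are in place, the two constructions are functors that are the identity on underlying sets and mutually inverse bijections on morphism sets, which is exactly an isomorphism of categories. I do not expect a serious obstacle: the only points needing genuine care are the use of $T_0$ for antisymmetry and keeping the up-set/down-set (equivalently $\le$ versus $\ge$) conventions consistent throughout, so that the composite of the two functors is the identity rather than the ``opposite order'' functor.
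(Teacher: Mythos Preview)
Your proposal is correct and follows exactly the approach sketched in the paper's preliminaries: define the order via minimal open neighbourhoods, topologize via up-sets, and check that continuity corresponds to monotonicity so that the two functors are mutually inverse. The paper itself does not give a proof beyond that sketch and simply cites the classical result of Alexandroff, so your argument is a fleshed-out version of precisely what the paper outlines.
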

This equivalence allows us to treat finite \(T_0\)-topological spaces and finite posets interchangeably. From now on, we will not distinguish between them. The \emph{Hasse diagram} of a finite poset \(X\) is a directed graph whose vertices are the elements of \(X\), with a directed edge from \(x\) to \(y\) whenever \(x < y\) and there is no \(z \in X\) such that \(x < z < y\). In subsequent diagrams, we omit the orientation of edges and assume an upward orientation.

We now introduce some basic notions from this theory. Given a finite poset \(X\) and \(x \in X\), the \emph{height} of \(X\), denoted \(\textnormal{ht}(X)\), is one less than the maximum number of elements in a chain of \(X\). The height of a point \(x\), denoted \(\textnormal{ht}(x)\), is defined as \(\textnormal{ht}(U_x)\). A \emph{fence} in \(X\) is a finite sequence \(\gamma = \{x_1, \ldots, x_n\}\) such that any two consecutive elements are comparable.

\begin{prop}\label{prop_fences}
    Let $X$ be a finite \(T_0\)-topological space and let $f:X\rightarrow X$ be a homeomorphism. If $\gamma=\{x_1,...,x_n\}$ is a fence, then $f(\gamma)=\{f(x_1),...,f(x_n) \}$ is a fence, $|F_{x_i}|=|F_{f(x_i)}|$, $|U_{x_i}|=|U_{f(x_i)}|$ and $\textnormal{ht}(x_i)=\textnormal{ht}(f(x_i))$ for every $i=1,...,n$.
\end{prop}

Let \(X\) be a finite \(T_0\)-topological space and \(x \in X\). We say that \(x\) is a \emph{down beat point} (respectively, \emph{up beat point}) if \(U_x \setminus \{x\}\) has a maximum (respectively, \(F_x \setminus \{x\}\) has a minimum). Removing beat points does not change the homotopy type of \(X\); see \cite{stong1966finite}. A finite space \(X\) is called \emph{minimal} if it contains no beat points.

We state a key result that will be used to prove one of our main theorems.

\begin{thm}\label{thm_aut_igual_E}
    Let $X$ be a finite \(T_0\)-topological space. If $X$ is a minimal finite space, then $\textnormal{Aut(X)}=\mathcal{E}(X)$.
\end{thm}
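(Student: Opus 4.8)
The plan is to recall that a homotopy equivalence between finite $T_0$-spaces admits a particularly rigid description when the spaces involved are minimal: Stong's classification says that two minimal finite spaces are homotopy equivalent if and only if they are homeomorphic, and more precisely that any homotopy equivalence between minimal finite spaces is a homeomorphism. First I would set up the relevant notation: recall that $\mathcal{E}(X)$ is the group of homotopy classes of self-homotopy equivalences of $X$, and that there is a natural homomorphism $\mathrm{Aut}(X) \to \mathcal{E}(X)$ sending $g$ to its homotopy class $[g]$; proving $\mathrm{Aut}(X) = \mathcal{E}(X)$ really amounts to showing this homomorphism is an isomorphism, i.e.\ that it is both injective and surjective.

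For surjectivity, take any self-homotopy equivalence $f : X \to X$. Since $X$ is minimal, Stong's theorem tells us $f$ is homotopic to a homeomorphism $g : X \to X$, so $[f] = [g]$ lies in the image of $\mathrm{Aut}(X)$. For injectivity, suppose $g, g' \in \mathrm{Aut}(X)$ satisfy $[g] = [g']$, i.e.\ $g \simeq g'$ as maps $X \to X$. Then $g^{-1}g' : X \to X$ is a homeomorphism homotopic to the identity; the key point is that on a minimal finite space the only self-map homotopic to the identity is the identity itself — this is precisely the statement (again due to Stong) that a finite space with no beat points has no proper strong deformation retract, so any map homotopic to $\mathrm{id}_X$ must equal $\mathrm{id}_X$. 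Hence $g^{-1}g' = \mathrm{id}_X$, so $g = g'$, giving injectivity.

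The main obstacle — really the only nontrivial input — is Stong's rigidity result for minimal finite spaces, namely that a map homotopic to the identity on a minimal finite space is the identity. Everything else is formal. One clean way to see this rigidity: if $f \simeq \mathrm{id}_X$ then by the finite-space homotopy theory one can factor the homotopy through a sequence of comparisons, and minimality (no up or down beat points) forces each intermediate adjustment to be trivial; alternatively one invokes \cite{stong1966finite} directly. I would cite \cite{stong1966finite} (and \cite{barmak2011algebraic}) for this and keep the argument short, since the theorem is stated as a "key result" to be \emph{used} rather than developed from scratch.
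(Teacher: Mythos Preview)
Your proof sketch is correct and is exactly the standard argument due to Stong: a self-map of a minimal finite space homotopic to the identity must equal the identity, whence the natural map $\mathrm{Aut}(X)\to\mathcal{E}(X)$ is a bijection. There is nothing to compare against, however, because the paper does not prove Theorem~\ref{thm_aut_igual_E}; it is stated without proof as a classical result from the literature (implicitly attributed to Stong~\cite{stong1966finite}, see also~\cite{barmak2011algebraic}), and is used as a black box in the proof of Theorem~\ref{thm_retract}. You yourself anticipate this in your final paragraph, so your instinct to cite the result and move on is exactly what the paper does.
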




To conclude this section, we recall some basic categorical concepts. Let \( \mathcal{C} \) be a category. An object \( A \) is called a \emph{retract} of an object \( B \) if there exist morphisms \( r: B \rightarrow A \) and \( i: A \rightarrow B \) such that \( r \circ i = \mathrm{id}_A \). The morphism \( r \) is referred to as a \emph{retraction} of \( B \) onto \( A \). In this paper, we consider this notion in the categories of topological spaces \( \mathbf{Top} \), the homotopy category of topological spaces \( \mathbf{HTop} \), and the category of groups \( \mathbf{Grp} \).

It is straightforward to verify that given a retraction \( r: G \rightarrow H \) in \( \mathbf{Grp} \), one has
\[
G = \ker(r) \cdot i(H), \quad \text{and} \quad \ker(r) \cap i(H) = \{e\},
\]
where  \( e \) is the identity element.

Let \( X \) and \( Y \) be topological spaces, and let \( \overline{r}: X \rightarrow Y \) be a retraction in \( \mathbf{Top} \) (or \( \mathbf{HTop} \)), with \( \overline{i}: Y \rightarrow X \) a morphism such that \( \overline{r} \circ \overline{i} = \mathrm{id}_Y \). Then, there exists a natural map
\[
\overline{r}' : \mathrm{Aut}(X) \rightarrow \mathrm{Aut}(Y) \quad \text{(or } \overline{r}' : \mathcal{E}(X) \rightarrow \mathcal{E}(Y) \text{)}
\]
defined by
\[
\overline{r}'(f) = \overline{r} \circ f \circ \overline{i},
\]
for every \( f \in \mathrm{Aut}(X) \) (or \( f \in \mathcal{E}(X) \)), provided that the restriction \( f|_{\overline{i}(Y)} \in \mathrm{Aut}(\overline{i}(Y)) \) (or \( f|_{\overline{i}(Y)} \in \mathcal{E}(\overline{i}(Y)) \)).

We say that a group retraction \( r: G \rightarrow H \) is \emph{realizable} in the category \( \mathbf{Top} \) or \( \mathbf{HTop} \) if there exists a topological retraction \( \overline{r}: X \rightarrow Y \) satisfying the above conditions and such that \( r = \overline{r}' \).
\section{Main results}\label{sec_main_result}

Let $r:G\rightarrow H$ be a retraction of groups, where $G$ is a finite group of order $n$. Let $S_1$ and $S_2$ be generating systems for $\ker(r)$ and $H$, respectively. Then  \( S =S_1\cup S_2= \{s_1, \ldots, s_m\} \) is a generating system for $G$. For each \( g \in G \), we define a finite set \( C_g \) consisting of the following elements: \begin{align*}   
 C_g=\{& (g,-1),(g,0),(g,*_0) ,(g,1),(g,*_1),(g,*_2),(g,2),(g,*_3),(g,3),...,(g,*_m),(g,m),\\ 
 &(g,1,0),(g,1,1),(g,1,2),(g,1,3),...,(g,1,9) (g,2,0),(g,2,1),(g,2,2),(g,2,3),\\
 &(g,2,4),...,(g,2,10),(g,3,0),(g,3,1),...,(g,3,11),..., (g,m,0),...,(g,m,m+8)\\
 &(g,-1,0),(g,-1,1),...,(g,-1,5),(g,-1,6)\}.
 \end{align*}
We now define a partial order on \( C_g \) by specifying the following relations:
 \begin{enumerate}
     \item $(g,-1)>(g,*_0)<(g,0)>(g,*_1)<(g,1)>(g,*_2)<(g,2)>(g,*_3)<\cdots >(g,*_m)<(g,m)$.
     \item $(g,i)>(g,i,0)<(g,i,1)>(g,i,2)<\cdots<(g,i,i+8)$; $(g,i,i+8)>(g,i,i+5)$ and $(g,i,i+6)>(g,i,i+1)$  if $i$ odd and $1\leq i\leq m$. 
     \item $(g,i)>(g,i,0)<(g,i,1)>(g,i,2)<\cdots>(g,i,i+8)$; $(g,i,i+5)>(g,i,i+8)$ and $(g,i,i+1)>(g,i,i+6)$ if $i$ even and $1\leq i\leq m$. 
     \item $(g,i)>(g,*_0)$ for every $1\leq i \leq m$.
     \item $(g,-1)>(g,-1,0)<(g,-1,1)>(g,-1,2)<...>(g,-1,6)$; $(g,-1)>(g,-1,4)$ and $(g,-1,6)<(g,-1,3)$.
 \end{enumerate}

It is evident that \( C_g \), equipped with the relations above, forms a partially ordered set of height 1. In Figure~\ref{fig_scheme_C_g_retract}, we illustrate the Hasse diagram of \( C_g \) for a finite group with a generating set consisting of two elements.

To conclude the construction, define
\[
X_r = \bigsqcup_{g\in G}^n C_{g},
\]
and extend the partial order by declaring:

\[
(g,i) > (h,*_0) \quad \text{whenever } h = g s_i \text{ for } i = 1, \ldots, m,
\]
while preserving the internal ordering within each \( C_g \). The resulting space \( X_G \) is a finite, connected partially ordered set of height 1.

\begin{figure}[ht]
    \centering
    \includegraphics[width=0.6\linewidth]{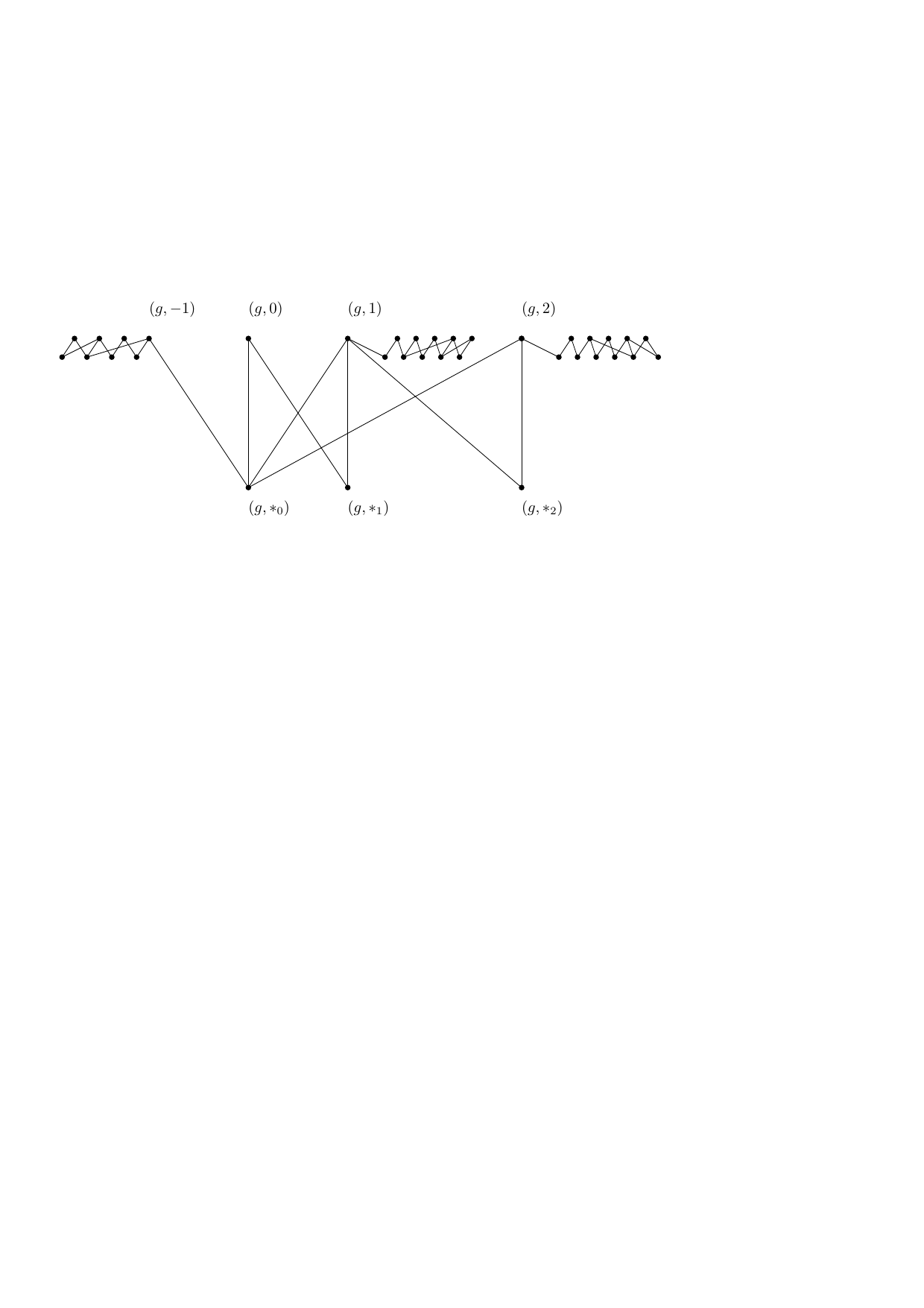}
    \caption{Hasse diagram of $C_g$ for a generating system of two elements.}
    \label{fig_scheme_C_g_retract}
\end{figure}

\begin{thm}\label{thm_retract}
Let \( r: G \rightarrow H \) be a group retraction, where \( G \) is a finite group of order \( n \). Then there exists a connected finite topological space \( X_r \) of height 1 and a topological retraction \( \overline{r}: X_r \rightarrow X_r \) that realizes \( r \) in both the topological category \( \mathbf{Top} \) and the homotopical category \( \mathbf{HTop} \).
\end{thm}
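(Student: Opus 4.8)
The plan is to verify directly that the space $X_r$ constructed above has the required properties, organized around three claims: (i) $\mathrm{Aut}(X_r)\cong G$ (and likewise $\mathcal{E}(X_r)\cong G$ since $X_r$ is minimal, by Theorem~\ref{thm_aut_igual_E}); (ii) the map $\overline{r}$ to be defined is a well-defined topological retraction; and (iii) the induced map $\overline{r}'$ on automorphism groups coincides with $r$. The overall strategy mimics the classical Barmak--Minian rigidification technique: the gadgets $C_g$ are built so that their internal combinatorial structure is \emph{rigid}, forcing any automorphism to act by permuting the ``blocks'' $C_g$ in a way governed by the Cayley graph of $(G,S)$, hence by left translation.

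First I would analyze $\mathrm{Aut}(X_r)$. For each $a\in G$, left translation $g\mapsto ag$ on the index set extends to an automorphism $L_a$ of $X_r$ sending $C_g$ to $C_{ag}$ and respecting all labels, because the edges $(g,i)>(h,*_0)$ with $h=gs_i$ are preserved: $ah=(ag)s_i$. This gives an injection $G\hookrightarrow\mathrm{Aut}(X_r)$. For surjectivity, let $f\in\mathrm{Aut}(X_r)$. Using Proposition~\ref{prop_fences} (preservation of $|U_x|$, $|F_x|$, heights, and fences), one shows that the ``decoration'' attached at each element $(g,i)$ — the little fence of length $i+8$ with the extra comparabilities in items (2),(3),(5), whose combinatorial isomorphism type depends on $i$ and on the parity of $i$ — pins down the value of $i$; in particular the central spine element $(g,*_0)$ is characterized (it is the unique minimal point below $m+1$ maximal points of the spine shape, say), so $f$ permutes the set $\{(g,*_0):g\in G\}$, inducing a permutation $\sigma$ of $G$. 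The spine relations in item (1) together with item (4) then force $f((g,i))=(\sigma(g),i)$ for all $i$, and the cross-block edges force $\sigma(gs_i)=\sigma(g)s_i$ for every generator $s_i$; since $S$ generates $G$ and $\sigma$ is determined by $\sigma(e)=:a$, we get $\sigma(g)=ag$, i.e. $f=L_a$. The key obstacle here — and the place where the elaborate numerology $i+8$, the odd/even split, and the two ``anomalous'' edges per gadget matter — is checking that no unintended automorphism mixes different decorations or reverses orientation; this is a finite but delicate case analysis verifying that the listed decorated posets are pairwise non-isomorphic and individually rigid with no automorphism swapping the ``entry'' vertex $(g,i)$ with anything else.

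Next I would define the retraction. Since $r:G\to H$ is a group retraction with section $i:H\hookrightarrow G$, set $Y=\overline{i}(\{?\})$; concretely, define $\overline{r}:X_r\to X_r$ on blocks by $\overline{r}(C_g)=C_{r(g)}$ — more precisely $\overline{r}$ sends $(g,j)\mapsto(i(r(g)),j)$ and similarly on all labels — and let $\overline{i}:\overline{r}(X_r)\to X_r$ be the inclusion. One checks $\overline{r}$ is order-preserving: the cross edges $(g,k)>(h,*_0)$ with $h=gs_k$ go to $(i(r(g)),k)>(i(r(h)),*_0)$, and $i(r(h))=i(r(g))\,i(r(s_k))$; when $s_k\in S_1=\ker(r)$ this collapses $h$ and $g$ to the same image (so the edge maps to an internal edge of one block via item (4), $(g,k)>(g,*_0)$, which holds), and when $s_k\in S_2$ it is a genuine cross edge in the image — so $\overline{r}$ is continuous. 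Clearly $\overline{r}\circ\overline{r}=\overline{r}$ and $\overline{r}$ fixes $Y:=\overline{r}(X_r)=\bigsqcup_{h\in i(H)}C_h$ pointwise, so $\overline{r}\circ\overline{i}=\mathrm{id}_Y$; this is a retraction in $\mathbf{Top}$, hence also in $\mathbf{HTop}$. By the same rigidity analysis applied to $Y$ (which is just the $X_r$-type construction for $H$ with generating system $S_2$, together with the now-``inert'' $\ker(r)$-decorations that are still rigid), $\mathrm{Aut}(Y)\cong H$ and $\mathcal{E}(Y)\cong H$.

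Finally, the identification $\overline{r}'=r$: for $a\in G$, $\overline{r}'(L_a)=\overline{r}\circ L_a\circ\overline{i}$ acts on $Y$ by $C_h\mapsto C_{ah}\mapsto C_{r(ah)}=C_{r(a)r(h)}$, which is precisely left translation by $r(a)\in H$ on $Y$; under the identifications $\mathrm{Aut}(X_r)\cong G$, $\mathrm{Aut}(Y)\cong H$ this is the homomorphism $a\mapsto r(a)$, i.e. $\overline{r}'=r$ (one should also note $L_a|_{\overline{i}(Y)}$ need not land in $\mathrm{Aut}(\overline{i}(Y))$, but $\overline{r}\circ L_a\circ\overline{i}$ is nonetheless well-defined and an automorphism of $Y$, so the definition applies). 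Connectivity and height $1$ of $X_r$ are immediate from the construction (every element is within one spine-step of some $(g,*_0)$, and these are linked across blocks through the generating set). The main work, as indicated, is the rigidity bookkeeping in step~(i); everything else is a direct verification.
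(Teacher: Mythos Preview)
Your proposal is correct and follows essentially the same route as the paper: the left-translation map $L_a$ is the paper's $T(a)$, your surjectivity argument via the numerical invariants $|U_x|,|F_x|$ and the rigidity of the decorated fences is exactly the case analysis the paper carries out (the paper pins down $(g,*_0)$ by $|F_{(g,*_0)}|=2m+3$, then $(g,-1)$, then the spine $(g,i)$ inductively), and your definition and continuity proof for $\overline{r}$ with the $s_k\in\ker(r)$ versus $s_k\in S_2$ split matches the paper verbatim. If anything you are slightly more explicit than the paper in verifying $\overline{r}'=r$ and in flagging that $L_a$ need not preserve $\overline{i}(Y)$ even though $\overline{r}\circ L_a\circ\overline{i}$ is an automorphism of $Y$; the paper simply asserts that ``$\overline{r}$ realizes $r$'' at that point.
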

\begin{proof}
Let \( X_r \) be the finite topological space constructed as above. First, we prove that $\textnormal{Aut}(X_r)$ is isomorphic to $G$. Define a map \( T: G \to \textnormal{Aut}(X_r) \) by setting:
\[
T(g)(h,k) = (gh,k), \quad T(g)(h,*_i) = (gh,*_i), \quad T(g)(h,l,j) = (gh,l,j), \quad  T(g)(h,-1,t) = (gh,-1,t),
\]
for all \(-1\leq k\leq m\), \( 0 \leq i \leq m \),  \( 1 \leq l \leq m \), \( 0 \leq j \leq l+8 \) and \(0\leq t\leq 6\).

We first show that \( T \) is a well-defined homomorphism of groups. By the definition of $T$, the map \( T(g) \) clearly preserves the partial order defined on $X_r$, hence it is continuous. Its inverse is given by \( T(g^{-1}) \), so \( T(g) \in \textnormal{Aut}(X_r) \). Moreover, for any \( g, h \in G \), we have:
\[
T(g) \circ T(h) = T(gh),
\]
which shows that \( T \) is a homomorphism of groups.

To prove injectivity, suppose \( g \neq h \) and assume, for contradiction, that \( T(g) = T(h) \). Then:
\[
T(g)(e,0) = (g,0) = T(h)(e,0) = (h,0),
\]
which implies \( g = h \), a contradiction. Hence, \( T \) is injective.

We now prove surjectivity. Let \( f: X_r \to X_r \) be a homeomorphism. By construction, the only points \( x \in X_r \) such that \( |F_x| = 2m+3 \)  are precisely those of the form \( (h,*_0) \). Thus, we may assume:
\[
f((e,*_0)) = (h,*_0),
\]
for some \( h \in G \). We aim to show that \( f = T(h) \).

Let $x\in X_r$ such that $x>(g,*_0)$. Then $|U_x|=4$ if and only if  $x=(g,-1)$ or  $x=(g,m)$. On the other hand, there exists a unique $(g,*_0)\neq y<(g,-1)$ such that $|F_y|=4$ ($y=(g,-1,4)$), while this does not hold for $(g,m)$. By Proposition \ref{prop_fences} and this fact, we can conclude that $f((e,-1))=(h,-1)$. This also gives that $f((e,-1,4))=(h,-1,4)$ and, consequently, $f((e,-1,5))=(h,-1,5)$. From this, it is easy to very that $f((e,-1,i))=(h,-1,i)$ with $i=0,1,2,3,6$.

Observe that, in general, if \((g,-1) \neq x > (g,*_0) \), then either:
\begin{itemize}
    \item[(i)] \( x = (gs_i^{-1},i) \) for some generator \( s_i \in S \), or 
    \item[(ii)] \( x = (g,0) \).
\end{itemize}

In case (i), the point \( (gs_i^{-1},i) \) satisfies \( |U_{(gs_i^{-1},i)}| = 5 \) (if \( i \neq m \)) or \( 4 \) (if \( i = m \)). In case (ii), \( |U_{(g,0)}| = 3 \).  By Proposition~\ref{prop_fences}, the only possibility for $f$ is:
\[
f((e,0)) = (h,0),\quad f((e,m))=(h,m), \quad f((e,*_1)) = (h,*_1), \quad f((e,1)) = (h,1).
\]

Now consider the point \( (e,*_2) \), which is uniquely characterized by the relations:
\[
(e,1) > (e,*_2), \quad (e,2) > (e,*_2),
\]
and the fact that \( |U_{(e,2)}| = 5 \) if $2\neq m$ or \( 4 \) if $2=m$. The same structure holds for \( (h,*_2) \), so by Proposition~\ref{prop_fences}, we conclude:
\[
f((e,2)) = (h,2).
\]

Repeating this argument inductively, we obtain:
\[
f((e,*_i)) = (h,*_i), \quad f((e,i)) = (h,i), \quad \text{for } 3 \leq i \leq m,
\]
and consequently:
\[
f((e,l,j)) = (h,l,j), \quad \text{for all } 1 \leq l \leq m, \; 0 \leq j \leq l+8.
\]
Thus, \( f(C_e) = C_h \).

Now consider the point \( (s_i,*_0) \), which is the unique point satisfying:
\[
(s_i,*_0) < (e,i),
\]
and does not belong to \( C_e \). Since \( f \) is a homeomorphism, we must have:
\[
f((s_i,*_0)) = (hs_i,*_0),
\]
and repeating the previous arguments, we conclude that \( f \) and \( T(h) \) coincide on \( C_{s_i} \).

By iterating this argument and using the connectedness of \( X_G \), we conclude that \( f = T(h) \). Therefore, \( T \) is surjective. Hence, \( T: G \to \textnormal{Aut}(X_G) \) is an isomorphism of groups.

Let $\overline{r}:X_r\rightarrow X_r$ defined by 
\[
\overline{r}(g,k)=(r(g),k), \quad \overline{r}(g,*_i)=(r(g),*_i), \quad \overline{r}(g,l,j)=(r(g),l,j), \quad \overline{r}(g,-1,t)=(r(g),-1,t),
\]
for all  \( -1 \leq k \leq m \), \( 0 \leq i \leq m \), \( 1 \leq l \leq m \), \( 0 \leq j \leq l+8 \) and \(0\leq t\leq 6\). We prove that $\overline{r}$ is continuous. Consider $x,y\in X_r$ such that $x<y$ and distinguish cases: 
\begin{enumerate}
    \item $x,y\in C_g$ for some $g\in G$.
    \item $x\in C_g$ and $y\notin C_g$.
\end{enumerate}
If 1 holds, then $\overline{r}(x)<\overline{r}(y)$ by the definition of $\overline{r}$. Suppose now that 2 holds. Then $x=(g,*_0)$ and $y=(gs_i^{-1},i)$ for some $i=1,...,m$. By the definition of $\overline{r}$, we obtain 
\[ 
\overline{r}(x)=(r(g),*_0)), \quad \overline{r}(y)=(r(g)r(s_i^{-1}),i).
\]
Again, we distinguish cases. If $s_i\in \ker(r)$, then 
\[
\overline{r}(y)=(r(g)r(s_i^{-1}),i)=(r(g),i)>(r(g),*_0)=\overline{r}(x).
\]
If $s_i\notin \ker(r)$, then $s_i\in H$ and $s_i^{-1}\in H$. Therefore, 
\[
\overline{r}(y)=(r(g)r(s_i^{-1}),i)=(r(g)s_i^{-1},i)>(r(g),*_0)=\overline{r}(x).
\]
Hence, $\overline{r}$ is a continuous map. Moreover, it is evident that 
\[
\overline{r}(X_r)=\bigsqcup_{h\in i(H)} C_h,
\]
where we keep each partial order within $C_h$ and we have
\[
(g,i)>(h,*_0)\quad \textnormal{whenever } h=gr(s_i) \textnormal{ for } i=1,...,m.
\]
Note that for any $g\neq h$ in $i(H)$, $(g,i)>(h,*_0)$ if and only if  $s_i\in i(H)$ and $h=gs_i$. If $s_i\in \ker(r)$, then $(h,i)>(g,*_0)$ if and only if $g=h$.
It is also clear that $\overline{r}$ is the identity on $\overline{r}(X_r)$. Thus, $\overline{r}$ is a topological retraction. Additionally, by construction and adapting the proof to show that $\textnormal{Aut}(X_r)\simeq G$, we may conclude that $\textnormal{Aut}(\overline{r}(X_r))$ is isomorphic to $i(H)$ and also that $\overline{r}$ realizes $r$. To conclude, note that $X_r$ does not have beat points, which gives $\textnormal{Aut}(X_r)=\mathcal{E}(X_r)$ by Theorem \ref{thm_aut_igual_E}. From this, we conclude the result for the homotopical category $\mathbf{HTop}$.

\end{proof}

\begin{cor}\label{cor_realization}
    Let \( G \) be a finite group of order \( n \). Then there exists a connected finite topological space \( X_G \) of height 1 such that
\[
\mathrm{Aut}(X_G) \cong \mathcal{E}(X_G) \cong G.
\]
\end{cor}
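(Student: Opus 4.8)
The plan is to deduce this immediately from Theorem~\ref{thm_retract} by specializing to a trivial choice of retraction. Recall that the identity homomorphism $\mathrm{id}_G : G \to G$ is a retraction of groups in $\mathbf{Grp}$ (with section $i = \mathrm{id}_G$, so that $\mathrm{id}_G \circ i = \mathrm{id}_G$); here $\ker(\mathrm{id}_G) = \{e\}$ and the image is all of $G$. Applying Theorem~\ref{thm_retract} to $r = \mathrm{id}_G$ produces a connected finite topological space $X_{\mathrm{id}_G}$ of height $1$ together with a topological retraction realizing $\mathrm{id}_G$, and in particular $\mathrm{Aut}(X_{\mathrm{id}_G}) \cong G$. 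I would simply set $X_G := X_{\mathrm{id}_G}$.

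The key steps, in order, are: (1) verify that $\mathrm{id}_G$ is a group retraction so that Theorem~\ref{thm_retract} applies; (2) extract from the conclusion of Theorem~\ref{thm_retract} the space $X_G$ of height $1$ with $\mathrm{Aut}(X_G) \cong G$; (3) observe, exactly as in the final lines of the proof of Theorem~\ref{thm_retract}, that the constructed space $X_G$ has no beat points, so that Theorem~\ref{thm_aut_igual_E} yields $\mathrm{Aut}(X_G) = \mathcal{E}(X_G)$; (4) combine (2) and (3) to conclude $\mathcal{E}(X_G) \cong \mathrm{Aut}(X_G) \cong G$, with $X_G$ connected and of height $1$. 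Note that when $r = \mathrm{id}_G$ one may take the generating system $S_1$ for $\ker(r) = \{e\}$ to be empty and $S_2$ any generating system of $G$, so the construction of $X_r$ is unambiguous in this case.

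I do not anticipate any genuine obstacle here, since the statement is a direct corollary; the only point requiring a moment's care is to confirm that the degenerate retraction $\mathrm{id}_G$ is admissible as input to the construction of Section~\ref{sec_main_result} (it is, with $S_1 = \varnothing$), and that the claim ``$X_r$ has no beat points'' asserted in the proof of Theorem~\ref{thm_retract} covers this specialization. If one preferred a self-contained argument avoiding the retraction machinery, an alternative would be to present directly the subspace $\bigsqcup_{g\in G} C_g$ with the gluing relations induced by a generating set $S$ of $G$ and repeat verbatim the surjectivity/injectivity argument for $T : G \to \mathrm{Aut}(X_G)$ given in the proof of Theorem~\ref{thm_retract}; but invoking the theorem is cleaner and no weaker.
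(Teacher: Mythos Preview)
Your proposal is correct and follows essentially the same route as the paper: apply Theorem~\ref{thm_retract} to the identity retraction $r=\mathrm{id}_G:G\to G$, set $X_G:=X_r$, and read off $\mathrm{Aut}(X_G)\cong\mathcal{E}(X_G)\cong G$. Your step~(3), invoking explicitly the absence of beat points together with Theorem~\ref{thm_aut_igual_E}, is already absorbed into the last lines of the proof of Theorem~\ref{thm_retract}, so the paper's corollary simply cites the theorem without restating it; otherwise the arguments coincide.
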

\begin{proof}
Consider the identity retraction \( r: G \rightarrow G \). By Theorem~\ref{thm_retract}, there exists a connected finite topological space \( X_r \) of height 1 and a topological retraction \( \overline{r}: X_r \rightarrow X_r \) that realizes \( r \) in both \( \mathbf{Top} \) and \( \mathbf{HTop} \). Define \( X_G := X_r \). Then, by construction, we have
\[
\mathrm{Aut}(X_G) \cong \mathcal{E}(X_G) \cong G,
\]
as desired.
\end{proof}

Regarding the same question considered in Corollary~\ref{cor_realization}, but now for spaces of height \( 0 \), we may deduce the following result.
\begin{prop}\label{prop_height_zero}
Let \( X \) be a finite \( T_0 \)-topological space such that \( \mathrm{ht}(X) = 0 \). Then \( \mathrm{Aut}(X) \) or \( \mathcal{E}(X) \) is isomorphic to the symmetric group on \( |X| \) elements.
\end{prop}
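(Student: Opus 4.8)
The plan is to observe that the hypothesis $\mathrm{ht}(X)=0$ rigidly determines the topology of $X$: it must be discrete. Recall that $\mathrm{ht}(X)$ is one less than the maximal size of a chain in the poset $(X,\le_\tau)$, so $\mathrm{ht}(X)=0$ is equivalent to saying that $X$ contains no chain with two distinct elements, i.e. that no two distinct points of $X$ are comparable. Under the Alexandroff correspondence recalled in Section~\ref{sec_preliminaries}, this means $U_x=\{x\}$ for every $x\in X$, so every singleton is open and $X$ carries the discrete topology. This is the only step that requires any thought, and it is routine.

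Once $X$ is known to be discrete, the $\mathrm{Aut}$ half is immediate: viewing $X$ as an antichain, every bijection $X\to X$ is vacuously order-preserving, with order-preserving inverse, hence a homeomorphism; thus $\mathrm{Aut}(X)$ is the full permutation group of the underlying finite set, which is isomorphic to the symmetric group on $|X|$ elements. For the $\mathcal{E}$ half I would apply Theorem~\ref{thm_aut_igual_E}: for each $x\in X$ the sets $U_x\setminus\{x\}$ and $F_x\setminus\{x\}$ are empty, hence have neither a maximum nor a minimum, so $X$ has no beat points and is a minimal finite space. Therefore $\mathrm{Aut}(X)=\mathcal{E}(X)$, and by the preceding paragraph this group is again $S_{|X|}$. (Alternatively one argues directly that two maps out of a discrete space are homotopic only when equal, since every path in a discrete space is constant, and that a self-homotopy equivalence of a finite discrete space must be a bijection because it induces a bijection on $\pi_0$.) In particular the ``or'' in the statement is in fact an ``and'': both invariants equal $S_{|X|}$.

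I do not expect a genuine obstacle here; the argument is short. The one spot deserving a line of care is verifying that the minimality hypothesis of Theorem~\ref{thm_aut_igual_E} really applies — one should note explicitly that an empty set has no maximum, so that even the one-point pieces of $X$ contribute no beat points. The degenerate case $X=\emptyset$ (where $\mathrm{Aut}(X)$ and $\mathcal{E}(X)$ are trivial, matching $S_0$) can be dispatched in a sentence.
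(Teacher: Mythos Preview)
Your proposal is correct and follows essentially the same approach as the paper: observe that $\mathrm{ht}(X)=0$ forces $X$ to be a discrete antichain, conclude that every bijection is a homeomorphism so $\mathrm{Aut}(X)\cong S_{|X|}$, and then use the absence of beat points together with Theorem~\ref{thm_aut_igual_E} to get $\mathrm{Aut}(X)=\mathcal{E}(X)$. Your write-up is in fact slightly more detailed than the paper's (you spell out why the empty sets $U_x\setminus\{x\}$ and $F_x\setminus\{x\}$ yield no beat points and you offer an alternative direct argument for $\mathcal{E}(X)$), but the underlying route is identical.
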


\begin{proof}
If \( \mathrm{ht}(X) = 0 \), then \( X = \{x_1, \ldots, x_n\} \) is an antichain. In this case, the topology is discrete, and every bijection of \( X \) is a homeomorphism. Moreover, since \( X \) has no beat points, we have \( \mathrm{Aut}(X) = \mathcal{E}(X) \). Therefore, \( \mathrm{Aut}(X) = \mathcal{E}(X) \) is isomorphic to the symmetric group on \( |X| \) elements.
\end{proof}

Corollary~\ref{cor_realization} and Proposition~\ref{prop_height_zero} together show that the minimal height required to realize a finite group \( G \) as the group of automorphisms  or the group of self-homotopy equivalences of a finite topological space is 1, unless \( G \) is the symmetric group. In that unique case, \( G \) can be realized by a space of height 0. As stated in Section \ref{sec_intro}, the following question remains open: Given a finite group $G$ that is not a symmetric group, what is the minimal cardinality of a topological space $X$ of height 1 such that $\textnormal{Aut}(X)$ (or $\mathcal{E}(X)$) is isomorphic to $G$?

\bibliography{bibliografia}

@article{alexandroff1937diskrete,
  title={Diskrete {R}äume},
  author={Alexandroff, P.S.},
  journal={ Mathematiceskii Sbornik (N.S.)},
  volume={2},
  number={3},
  pages={501--519},
  year={1937},
  publisher={}
}

@article{barmak2009automorphism,
  title={Automorphism groups of finite posets},
  author={Barmak, J.A. and Minian, E.G.},
  journal={Discrete Math.},
  volume={309},
  number={10},
  pages={3424--3426},
  year={2009},
  publisher={Elsevier}
}

@article{barmak2024smallest,
  title={Smallest posets with given cyclic automorphism group},
  author={Barmak, J.A. and Barreto, A.N.},
  journal={Algebraic Combinatorics},
  volume={7},
  number={5},
  pages={1307--1318},
  year={2024},
  publisher={}
}

@article{barmak2020automorphism2,
  title={Small posets with prescribed automorphism group},
  author={Barmak, J.A. },
  journal={Period. Math. Hung.},
  volume={86},
  number={},
  pages={210--216},
  year={2023},
  publisher={}
}

@book{barmak2011algebraic,
  title={Algebraic topology of finite topological spaces and applications},
  author={Barmak, J.A.},
  volume={2032},
  year={2011},
  publisher={Springer}
}

@article{birkhoff1936order,
  title={On groups of automorphisms},
  author={Birkhoff, G.},
  journal={Rev. Un. Mat. Argentina},
  volume={11},
  number={},
  pages={155--157},
  year={1946},
  publisher={}
}

@article{viruel2024permutation,
  title={Realization of {P}ermutation {M}odules via {A}lexandroff Spaces},
  author={Costoya, C. and Gomes, R. and Viruel, A.},
  journal={Results Math. },
  volume={79},
  number={169},
  pages={},
  year={2024},
  publisher={Springer}
}

@article{chocano2020topological,
  title={Topological realizations of groups in {A}lexandroff spaces},
  author={Chocano, P.J. and  Mor\'on, M.A. and Ruiz del Portal, F.R.},
  journal={Rev. R. Acad. Cien. Serie A. Mat.},
  volume={115},
  number={},
  pages={},
  year={2021},
  publisher={}
}

@article{chocano2025realizing,
  title={Realizing regular representations of finite groups},
  author={Chocano, P.J.},
  journal={J. Algebra},
  volume={663},
  number={},
  pages={454--467},
  year={2025},
  publisher={Elsevier}
}

@article{chocano2020some,
  title={On some topological realizations of groups and homomorphisms},
  author={Chocano, P.J. and  Mor\'on, M.A. and Ruiz del Portal, F.R.},
  journal={Trans. Amer. Math. Soc.},
  volume={},
  number={},
  pages={},
  year={2022},
  publisher={}
}

@article{may1966finite,
  title={Finite spaces and larger contexts},
  author={May, J.P.},
  journal={Unpublished book},
  volume={},
  number={},
  pages={},
  year={2016},
  publisher={}
}

@article{stong1966finite,
  title={Finite topological spaces},
  author={Stong, R. E.},
  journal={Trans. Amer. Math. Soc.},
  volume={123},
  number={2},
  pages={325--340},
  year={1966},
  publisher={JSTOR}
}

@article{thornton1972spaces,
  title={Spaces with given homeomorphism groups},
  author={Thornton, M.C.},
  journal={Proc. Amer. Math. Soc.},
  volume={33},
  number={1},
  pages={127--131},
  year={1972}
}
\bibliographystyle{plain}

\newcommand{\Addresses}{{
  \bigskip
  \footnotesize

  \textsc{ P.J. Chocano, Departamento de Matemática Aplicada, Ciencia e Ingeniería de los Materiales y Tecnología Electrónica, ESCET Universidad Rey Juan Carlos, 28933 Móstoles (Madrid), Spain}\par\nopagebreak
  \textit{E-mail address}:\texttt{pedro.chocano@urjc.es}

}}

\Addresses
\end{document}